\newtheorem{theorem}{Theorem}[section]
\theoremstyle{definition}
\newtheorem{problem}[theorem]{Problem}
\numberwithin{equation}{section}
\begin{document}

\title[$\mathcal{Q}$-groups satisfy the equation $T_G(r,s)=0$]{$\mathcal{Q}$-groups satisfy the equation $T_G(r,s)=0$}

\author[F.G. Russo]{Francesco G. Russo}
\address{Department of Mathematics and Applied Mathematics \endgraf
University of Cape Town \endgraf
Private Bag X1, Rondebosch \endgraf
7701, Cape Town, South Africa}
\email{francescog.russo@yahoo.com}

\keywords{Breadth ;  $\mathcal{Q}$-groups ; Euler's function; Inequalities ; Inverse Problem to Frobenius' Theorem}

\subjclass[2010]{20D10, 20D15,  20D60.}

\date{\today}

\begin{abstract} The present note shows that $\mathcal{Q}$-groups in [H. Heineken and F.G. Russo, Groups described by element numbers, Forum Math. 27 (2015), 1961--1977] are  solvable groups (not necessarily nilpotent) for which the equation $T_G(r,s)=0$ is satisfied.
\end{abstract}

\maketitle

\section{Two different approaches of investigation for the local breadth}

The present note deals with finite groups only. It is shown that the $\mathcal{Q}$-groups  in \cite[Page 1962]{hr1} and the groups of Meng and Shi in \cite{msbis, ms, mstris} satisfy the equation $T_G(r,s)=0$, mentioned in  \cite[Questions 1.4 and 1.5]{garonzi1}.

Chronologically \cite{garonzi1} appears two years after \cite{hr1}, but even \cite{tarny1, garonzi2, tarny2, tarny3} do not mention the so called   ``Inverse Problem to Frobenius' Theorem'' in the study of structural properties of groups by restrictions of numerical nature. A brief historical remark is then appropriate. Frobenius \cite{frobenius, frobeniusbis} and  Yamaki  \cite{yamaki1, yamaki2, yamaki3, yamaki4, yamaki5} worked on the following problem long time ago:  

\medskip
\medskip

\begin{problem}\label{frobenius}
Given  a group $G$ and  an integer $m \ge 1$ dividing the exponent $\exp(G)$, define  \begin{equation}\label{e1}L_m(G)=\{x \in G \ | \ x^m=1\}
\end{equation}
and study relations between $m$, $|L_m(G)|$ and the structure of $G$.  
\end{problem}

\medskip
\medskip

Frobenius showed that a divisor $m$ of $|G|$  divides also $|L_m(G)|$, that is, $|L_m(G)|=mk$ for some $k \ge 1$,  on the other hand, he conjectured that for $k=1$, then the $m$ elements of $L_m(G)$ form a characteristic subgroup of $G$. This was indeed shown by Iiyori and Yamaki in their aforementioned contributions with the help of the classification of simple groups. The elegance of the proof of Frobenius' Conjecture cannot be summarized here, but its importance is  clear since  no restriction on $G$ is given (of any nature !). A more recent problem, studied in  \cite{msbis, hr1, hr2, ms, mstris}, is to look at the size of $k$, and look for structural conditions on $G$. More precisely:

\medskip
\medskip

\begin{problem}[Inverse Problem to Frobenius' Theorem] \label{inversefrobenius} Classify all groups $G$ such that $m^{-1} \ |L_m(G)| \le \psi(m)$ for all $m$ dividing $\exp(G)$, where $\psi : m \in  \mathbb{N} \mapsto \psi(m) \in \mathbb{N}$ is a prescribed function depending only on  $m$.
\end{problem}

\medskip
\medskip

Now we change completely perspective and look numerically at the size of $L_m(G)$. Consider the Euler function $\varphi(k)$ and define the number
$$c_k=|\{C \le G \ | \ C \ \mbox{is a cyclic subgroup of order }  \ k \}|$$ 
of cyclic subgroups of $G$ of order exactly $k$. One can see that
\begin{equation}\label{e2}
 |L_m(G)|=   \sum_{k  \ | \ m} c_k \varphi(k) = m \cdot f(m),
\end{equation}
where $f(m) \ge 1$ is an integer depending only on $m$ and $G$.

Denoting by $r,s$ two real numbers and by $n= |G|$, Garonzi and Patassini \cite[Page 683, \S 2.4]{garonzi1} worked on the function
\begin{equation}\label{e3}
T_G(r,s)=\sum_{k \ | \ n} g_{k, n/k}^{r,s}  \ k \ (f(k)-1), 
\end{equation}
where $g_{k, \ n/k}^{r,s}$ are the coefficient expressed by the expansion  \cite[(1) of Lemma 7]{garonzi1}. These coefficients turn out to be  non-negative and the case $g_{k, \ n/k}^{r,s}=0$ is completely described by \cite[final part of (1) in Lemma 7]{garonzi1}.

Restrictions on $T_G(r,s)$ allows us to detect nilpotency in $G$ by \cite[Theorem 5]{garonzi1} and further  properties  were also noted by De Medts and  T\v{a}rn\v{a}uceanu in \cite{tarny1, tarny2, tarny3}. In particular, \cite[Questions 1.4]{garonzi1} asks whether the equality $T_G(r,s)=0$ detects solvability for $G$ for some $(r,s)$, while \cite[Questions 1.5]{garonzi1} asks whether the equality $T_G(r,s)=0$ implies structural properties for $G$ for some $(r,s) \in \mathbb{R}^2$.

\section{The main result and its proof}

In order to attack Problem \ref{inversefrobenius}, the authors of \cite{msbis, hr1, hr2, ms, mstris}  studied
what is called  the \textit{local breadth}, that is,  
\[\mathbf{b}_m(G) = |L_m(G)|  \ \cdot  \ m^{-1}\] of $G$ (under the general condition that $m$ is a divisor of the exponent of $|G|$, so not necessarily of $|G|$). In particular, we get
$$f(m)=\mathbf{b}_m(G) $$
comparing \eqref{e1} with \eqref{e2}.

 A first body of results is the following, where only nilpotent groups appear.

\begin{theorem}[See \cite{ms}, Main Theorem and \cite{hr2}, list in \S 4]\label{ms}
Let $G$ be a group and  $m \ge 1$ a divisor of $\exp(G)$. Then $f(m) \le 2$   if and only if 
\begin{itemize}   
\item[(i)] $G$ is cyclic;
\item[(ii)] $G \simeq C_h \times C_{2^{k-1}} \times C_2$ with $h$ odd and $k \ge 2$;
\item[(iii)] $G \simeq  C_h \times Q_8= C_h \times \langle a,b \ | \  a^4=1, a^2=b^2, b^{-1}ab=a^{-1}\rangle$ with $h$ odd;
\item[(iv)] $G \simeq C_h \times \langle a,b \ | \ a^{2^{t-1}}=b^2=1, b^{-1}ab=a^{1+2^{t-2}}\rangle$ with $t \ge 4$ and $h$ odd;
\item[(v)]$G \simeq C_h \times \langle a,b \ | \ a^3=b^{2^s}=1, b^{-1}ab=a^{-1}\rangle$ with $s \ge 1$ and $\gcd(h,6)=1$.
\end{itemize}
\end{theorem}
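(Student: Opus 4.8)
The plan is to prove the two implications separately, the ``only if'' direction being the substantial one.

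For the ``if'' direction I would use that $f$ is multiplicative across a coprime direct factor: if $G=C_h\times K$ with $\gcd(h,|K|)=1$, then every $m\mid\exp(G)$ splits uniquely as $m=m_1m_2$ with $m_1\mid h$, $\gcd(m_1,|K|)=1$, $m_2\mid\exp(K)$, one has $L_m(G)=L_{m_1}(C_h)\times L_{m_2}(K)$, and since $C_h$ is cyclic $|L_{m_1}(C_h)|=m_1$; hence $f_G(m)=f_K(m_2)$. So it suffices to check $f_K(k)\le 2$ for all $k\mid\exp(K)$ for the five possible $K$ (namely $1$, $C_{2^{k-1}}\times C_2$, $Q_8$, the modular group $M_{2^t}$, and $C_3\rtimes C_{2^s}$), and for each of these it is a short count of element orders: e.g.\ for $K=C_3\rtimes C_{2^s}$ one gets $|L_{2^j}(K)|=2^{j+1}$ when $j=s$ and $|L_{2^j}(K)|=2^j$ otherwise, the factor $3$ in $k$ only contributing harmlessly through the normal $C_3$.

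For the ``only if'' direction, assume $f(m)\le 2$ for every $m\mid\exp(G)$. First I would extract local information. Taking $m=\exp(G)$: since $L_{\exp(G)}(G)=G$ this gives $|G|\le 2\exp(G)$. Taking $m=p$ for a prime $p\mid|G|$: Frobenius' theorem (the starting point of Problem~\ref{frobenius}) gives $p\mid|L_p(G)|$, so $|L_p(G)|\in\{p,2p\}$; writing $|L_p(G)|-1=(p-1)\nu_p$ with $\nu_p$ the number of subgroups of order $p$, for odd $p$ this forces $\nu_p=1$, whence a Sylow $p$-subgroup is cyclic and the unique subgroup of order $p$ is normal, while for $p=2$ we get at most three involutions, so a Sylow $2$-subgroup is cyclic, generalized quaternion, of type $C_2\times C_{2^a}$, or modular. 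Iterating at prime powers $m=p^a$ (with $p^a$ the $p$-part of $\exp(G)$) and using a Bonferroni estimate for $|\bigcup_iP_i|$ over the Sylow $p$-subgroups $P_i$ together with $n_p\equiv 1\pmod p$, one upgrades this to: the Sylow $p$-subgroup is normal for every prime $p\ge 5$, and also for $p=3$ unless $|P_3|=3$.

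The remaining task is to show these normal cyclic odd Sylow subgroups and the constrained Sylow $2$-subgroup $S$ can be assembled only in the five listed ways. The product of the normal odd Sylow subgroups is a normal cyclic Hall subgroup $A$, so by Schur--Zassenhaus $G=A\rtimes S$, and I would analyze the action $S\to\operatorname{Aut}(A)$: a $2$-element inverting a factor $C_{q^a}$ with $q\ge 5$ forces $f_G(2^j)\ge(q+1)/2>2$ (with $2^j$ its order), and a $2$-element inverting $C_{3^a}$ with $a\ge2$ produces $3^a>3$ involutions, so the only admissible nontrivial action is inversion of a factor $C_3$ by a cyclic $S=C_{2^s}$. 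If the action is trivial then $G$ is nilpotent, hence the direct product of its Sylow subgroups, at most one of which is noncyclic, and the $2$-groups $P$ with $f_P(2^j)\le 2$ for all $j$ and $|P|\le 2\exp(P)$ are exactly $C_{2^a}$, $C_{2^{k-1}}\times C_2$, $Q_8$ and $M_{2^t}$ with $t\ge4$ (the bound $t\ge4$ discarding $M_8=D_8$, which has five involutions), giving (i)--(iv); if the action is the inversion above, the same element-order bookkeeping inside $C_3\rtimes S$ forces $S$ cyclic and $C_h$ a central direct factor coprime to $6$, giving (v). The hard part will be precisely this middle passage from the purely local facts ``$\nu_p=1$'' and ``$\le3$ involutions'' to the global control of $G=A\rtimes S$; the prime $3$ is the genuine borderline that creates the non-nilpotent family (v), and groups like $\mathrm{SL}_2(5)$ and $\mathrm{SL}_2(3)$ (cyclic odd Sylows, small Sylow $2$-subgroup) survive the involution count and are ruled out only by $f(3)>2$ and $f(4)>2$, so the counting at $m=3,9,4,2^s$ must be done with care and is where most of the work lies.
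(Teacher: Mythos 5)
First, a point of comparison: the paper does not actually prove this theorem --- it is imported verbatim from \cite{ms} (Main Theorem) and \cite{hr2}, so there is no internal proof to measure your attempt against. Your outline is, in spirit, the same strategy those sources use: Frobenius' theorem at $m=p$ to get a unique (hence normal) subgroup of order $p$ for odd $p$ and cyclic odd Sylow subgroups, an involution count for $p=2$, normality of the odd Sylow subgroups via counting generators of cyclic Sylow subgroups against $|L_{p^a}|\le 2p^a$, and then Schur--Zassenhaus plus an analysis of the $2$-action on the cyclic normal Hall $2'$-subgroup. The ``if'' direction via multiplicativity of $f$ over coprime direct factors is correct and standard.

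That said, as a proof the proposal has genuine gaps beyond the one you flag yourself. (a) The step ``at most three involutions, so a Sylow $2$-subgroup is cyclic, generalized quaternion, $C_2\times C_{2^a}$, or modular'' is a non sequitur: $C_4\times C_4$ and $C_4\rtimes C_4$ also have exactly three involutions. They are excluded only by $f(4)\le 2$, so the classification of admissible $2$-groups cannot be read off from the involution count; it is the crux of the theorem and in your write-up it is asserted (``are exactly $C_{2^a}$, $C_{2^{k-1}}\times C_2$, $Q_8$ and $M_{2^t}$'') rather than proved. One must genuinely run the bound $|L_{2^j}(P)|\le 2^{j+1}$ through the list of $2$-groups with few involutions, checking for instance that $Q_{2^t}$ with $t\ge 4$ fails at $m=4$ and that semidihedral groups fail at $m=2$. (b) In the analysis of $S\to\mathrm{Aut}(A)$ you treat only \emph{inverting} actions, but a $2$-element can act on a cyclic $q$-group with any $2$-power order dividing $q-1$ (e.g.\ an order-$4$ action on $C_5$, giving the Frobenius group of order $20$ with $f(4)=4$); the correct general argument counts the $\ge q$ conjugates of $x$ under $C_{q^a}$, all of the same $2$-power order, and this case is missing. (c) The final assembly for family (v) must also rule out non-cyclic $S$ acting through a quotient (e.g.\ $C_3\rtimes Q_8$ or $C_3\rtimes(C_2\times C_{2^a})$), which again requires explicit counts at $m=2$ and $m=4$. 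So the roadmap is sound and matches the literature, but the passage from the local constraints to the five families --- which you yourself identify as ``where most of the work lies'' --- is precisely the content of the theorem and is not yet carried out.
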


One can see without difficulties that the previous result remains true when $m$ is a divisor of $|G|$. After Theorem \ref{ms}, Chen, Meng and Shi \cite[Theorems 1.1, 1.2]{msbis} improved the classification based on the bound  $f(m) \le 2$ with another one, based on the bound $f(m) \le 3$. Successively it was introduced the more comprehensive  notion of $\mathcal{Q}$-$group$ in \cite{hr1} (i.e.:  a group $G$ such that  $f(m) \le m$ with $m$ divisor of  $\exp(G)$) and corresponding classifications are presented in   \cite[Theorems 3.2, 3.5, 3.6, 3.8, 3.12, 3.14]{hr1}, which we are going to summarize below.

\begin{theorem}[Structure of $\mathcal{Q}$-groups, see \cite{hr1}]\label{qgroups}
Let $G$ be a group and  $m \ge 1$ a divisor of $\exp(G)$ such that $f(m) \le m$.
\begin{itemize}   
\item[(i)] $G$ is solvable;
\item[(ii)] $G$ has a $2$-nilpotent normal subgroup $M$ of index $|G:M| \in  \{1, 3\}$;
\item[(iii)] if $p >3$ is a prime and $G$ a $p$-group, then $G$ is the product of two cyclic groups with trivial intersection;
\item[(iv)] if $\gcd(|G|,6)=1$, then $G$ is metabelian and the quotient $G/F(G)$ through the Fitting subgroup $F(G)$ is cyclic;
\item[(v)]$G=\mathrm{SL}(2,3)$ is a solvable non-metabelian  $\mathcal{Q}$-group with $\gcd(|G|,6) \neq 1$.
\end{itemize}
\end{theorem}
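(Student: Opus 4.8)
The statement collects \cite[Theorems 3.2, 3.5, 3.6, 3.8, 3.12, 3.14]{hr1}, and the plan is to recover each clause from the mechanism used there, the single tool common to all of them being the following remark: if $H\le G$ then $L_m(H)=L_m(G)\cap H$ and $\exp(H)\mid\exp(G)$, so every divisor $m$ of $\exp(H)$ also divides $\exp(G)$ and $f_H(m)=|L_m(H)|/m\le f_G(m)$. Hence the class of $\mathcal{Q}$-groups is closed under subgroups, and since $f(m)\le m$ is the same as $|L_m(G)|\le m^2$, a $\mathcal{Q}$-group has at most $m^2$ solutions of $x^m=1$ for every $m\mid\exp(G)$ — in particular at most $p^2-1$ elements of order $p$ for each prime $p$. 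All six clauses rest on applying this numerical pressure at a well-chosen $m$.

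\textbf{Clause (i).} I would take a $\mathcal{Q}$-group $G$ of least order that is not solvable. By the remark every proper subgroup of $G$ is a solvable $\mathcal{Q}$-group, so $G$ is a minimal non-solvable group; writing $R$ for its solvable radical, $G/R$ is a minimal simple group, hence by Thompson's classification one of $\mathrm{PSL}(2,2^p)$, $\mathrm{PSL}(2,3^p)$, $\mathrm{PSL}(2,q)$ (with $q>3$ prime, $q\equiv\pm2\pmod5$), $\mathrm{Sz}(2^p)$, or $\mathrm{PSL}(3,3)$, while $G$ is a (possibly non-split, central) extension of it by $R$. For each of the finitely many resulting shapes one produces a divisor $m$ of $\exp(G)$ with $|L_m(G)|>m^2$: the choice $m=2$ settles every $G$ with more than three involutions — this is all the $\mathrm{PSL}(2,q)$ and Suzuki cases, as well as every extension in which $R$ supplies a four-group of involutions; the survivors are the $C_2$-covers $\mathrm{SL}(2,3^p)$ and $\mathrm{SL}(2,5)$ (generalized quaternion Sylow $2$-subgroup, unique involution), for which an odd prime works — the defining characteristic, whose Sylow subgroup is elementary abelian of order $\ge p^3$, or $m=3$ for $\mathrm{SL}(2,5)$, where the ten Sylow $3$-subgroups already give $21>9$ solutions of $x^3=1$. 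This contradicts $G\in\mathcal{Q}$.

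\textbf{Clauses (ii)--(v).} With solvability secured I would turn the bound onto the Sylow structure. For (iii): a $p$-group $G$ with $p>3$ and $f(p)\le p$ has at most $p^2-1$ elements of order $p$, and the structure theory of $p$-groups with so few elements of order $p$ — where $p>3$ is precisely what makes the relevant $p$-groups regular enough, $Q_8$ being a $\mathcal{Q}$-group with $p=2$ not of the asserted form and $p=3$ also needing separate care — forces $G$ to be cyclic or a product of two trivially intersecting cyclic subgroups. For (ii): $f(2)\le2$ caps the involutions at three, so a Sylow $2$-subgroup $S$ is cyclic, generalized quaternion, or has $\Omega_1(S)\cong C_2\times C_2$; in the first case Burnside's normal $p$-complement theorem makes $G$ itself $2$-nilpotent, in the second $f(3)\le3$ excludes $\mathrm{SL}(2,q)$ with $q>3$ and leaves $\mathrm{SL}(2,3)$-type fusion, and in the third the three involutions with $1$ form a four-group $V$ normal in $G$ with $G/C_G(V)\hookrightarrow\mathrm{GL}(2,2)\cong S_3$; a transfer argument in $C_G(V)$ together with the remaining bounds $f(m)\le m$ rules out the even-index options and yields a normal $2$-nilpotent $M$ with $|G:M|\in\{1,3\}$ (realised by $A_4$ and $\mathrm{SL}(2,3)$). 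For (iv): when $\gcd(|G|,6)=1$ the group has odd order, (ii) makes it $2$-nilpotent so it has a Sylow tower, every Sylow subgroup is cyclic or bicyclic by (iii), and a short induction up a chief series gives $G$ metabelian with $G/F(G)$ cyclic. Finally (v) is a direct verification: $\exp(\mathrm{SL}(2,3))=12$, and its element orders (one involution, eight elements of order $3$, six of order $4$, eight of order $6$) give $f(m)\le m$ for $m\in\{1,2,3,4,6,12\}$, while $\mathrm{SL}(2,3)\rhd Q_8\rhd C_2\rhd 1$ exhibits it as a non-metabelian solvable $\mathcal{Q}$-group with $\gcd(24,6)=6\ne1$.

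\textbf{The main obstacle.} The serious difficulty is clause (i): the reduction to minimal non-solvable groups is painless, but one cannot pass to a simple subgroup — $\mathrm{SL}(2,5)$ is non-solvable yet contains no non-abelian simple subgroup — so the central covers of the Thompson list must be handled individually, and the bookkeeping of which prime $m$ to test in each family (although $m=2$ absorbs most of it) is the real work. The only other delicate ingredient is the role of $p>3$ in (iii): once regularity is available, $f(p)\le p$ forces the structure essentially by counting, but the exceptional primes $p=2,3$ behave differently and reappear, appropriately, in clauses (ii) and (v).
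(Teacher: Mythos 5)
The first thing to note is that this paper does not prove Theorem \ref{qgroups} at all: it is imported verbatim from \cite{hr1} (Theorems 3.2, 3.5, 3.6, 3.8, 3.12, 3.14 there) as background for Theorem \ref{t1}, so there is no in-paper argument against which to measure your reconstruction. Judged on its own terms, your framework is surely the right one: the subgroup-closure of the class (via $L_m(H)=L_m(G)\cap H$ and $\exp(H)\mid\exp(G)$) and the tactic of violating $|L_m(G)|\le m^2$ at a well-chosen divisor $m$ is exactly the engine behind results of this type, and your verification of clause (v) is complete and correct. But the decisive steps are asserted rather than proved. In clause (i) the claim that a minimal non-solvable $\mathcal{Q}$-group is a \emph{central} extension of a minimal simple group by its solvable radical is unjustified: what is true is that $G/\Phi(G)$ is minimal simple and the radical equals $\Phi(G)$, which is nilpotent but need not be central; and, as your own $\mathrm{SL}(2,5)$ versus $\mathrm{PSL}(2,5)$ example shows, involutions of $G/R$ need not lift to involutions of $G$, so ``more than three involutions in the simple quotient'' does not by itself give $|L_2(G)|>4$. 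The sentence ``the survivors are the $C_2$-covers'' is therefore the conclusion of the hard case analysis, not a step in it; without pinning down which Frattini extensions of the groups on Thompson's list can occur and bounding $|L_2|$ or $|L_p|$ in each, the proof of (i) is not there.

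Clauses (ii)--(iv) have the same character. ``The structure theory of $p$-groups with at most $p^2-1$ elements of order $p$ forces $G$ to be a product of two trivially intersecting cyclic subgroups'' is precisely the content of (iii) and is quoted rather than derived; note also that the bound at $m=p$ alone cannot suffice, since rank-two $p$-groups are not in general products of two cyclic subgroups, so the full hypothesis $f(m)\le m$ for \emph{every} divisor $m$ of $\exp(G)$ must enter. Likewise the ``transfer argument in $C_G(V)$'' in (ii) is a placeholder for the actual work, and the claim that three involutions together with the identity form a four-group needs an argument. None of this makes your proposal wrong in direction --- it is a faithful sketch of how such classifications go --- but it has genuine gaps at exactly the points where \cite{hr1} does the work. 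One smaller point: the statement as transcribed in this paper reads as if a single divisor $m$ with $f(m)\le m$ were hypothesised, whereas your argument (correctly, and in accordance with the definition of $\mathcal{Q}$-group) uses the bound at several divisors simultaneously; the hypothesis should be read as holding for all $m\mid\exp(G)$.
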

 
The notation for the Fitting subgroup, the notion of $p$-nilpotent group and the notation for the special linear group of dimension $2$ over the field with $3$ elements are very common and recalled in most of the references in the bibliography. Thanks to Theorem \ref{qgroups}, we are in the position to show that a large class of solvable groups, not necessarily nilpotent, satisfies the equation $T_G(r,s)=0$.

\begin{theorem}\label{t1}If $(r,s) \in \mathbb{R}^2$  satisfy  $1 \le g_{k, \ n/k}^{r,s}$  and $f(k) \le  {\big( g_{k, \ n/k}^{r,s}\big)}^{-\frac{1}{2}} \le k$   for all   $k$ divisors of $n$,  then  $T_G(r,s)=0$ and $G$ is solvable.
\end{theorem}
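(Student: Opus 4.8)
The plan is to show that the stated hypotheses force the local breadth to be constant, namely $f(k)=1$ for every divisor $k$ of $n=|G|$; once this is established, both conclusions are immediate.

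First I would combine the two assumptions. Since $g_{k,\ n/k}^{r,s}\ge 1$, we have $\big(g_{k,\ n/k}^{r,s}\big)^{-1/2}\le 1$, and feeding this into the left half of the displayed inequality $f(k)\le \big(g_{k,\ n/k}^{r,s}\big)^{-1/2}$ gives $f(k)\le 1$ for every $k\mid n$. On the other hand, by \eqref{e2} (equivalently, by the elementary part of Frobenius' theorem recalled after Problem \ref{frobenius}, namely that $k\mid |L_k(G)|$ whenever $k\mid |G|$), the quantity $f(k)=|L_k(G)|\,k^{-1}$ is a positive integer. Hence $f(k)=1$ for all $k\mid n$; note in passing that the right half $\big(g_{k,\ n/k}^{r,s}\big)^{-1/2}\le k$ is then automatically consistent, since $1\le k$.

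With $f(k)-1=0$ for every $k\mid n$, the defining expression \eqref{e3} collapses term by term, so that $T_G(r,s)=\sum_{k\mid n} g_{k,\ n/k}^{r,s}\,k\,(f(k)-1)=0$ regardless of the (nonnegative) values of the coefficients $g_{k,\ n/k}^{r,s}$. For the solvability statement I would invoke Theorem \ref{qgroups}: every divisor $m$ of $\exp(G)$ also divides $n$, hence $f(m)=1\le m$, so $G$ is a $\mathcal{Q}$-group and Theorem \ref{qgroups}(i) yields that $G$ is solvable. (In fact, applying $f(m)=1$ with $m=\exp(G)$ together with $L_{\exp(G)}(G)=G$ forces $|G|=\exp(G)$, whence $G$ is even cyclic; only solvability is asserted, but this sharper observation is worth recording.)

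Since the argument reduces to reading the displayed inequality against the positivity of $g_{k,\ n/k}^{r,s}$ and the integrality of $f(k)$, I do not expect a genuine obstacle. The single point deserving care is making explicit that $f$ takes positive integer values on all divisors of $n$, and not merely on divisors of $\exp(G)$, so that the forced inequality $f(k)\le 1$ really does pin $f(k)$ to $1$ throughout the index set of the sum in \eqref{e3}; this is exactly the content of \eqref{e2} and of the divisibility $k\mid|L_k(G)|$ quoted above.
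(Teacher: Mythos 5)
Your proof is correct, and it reaches the conclusion by a genuinely different and more direct route than the paper's. The paper first disposes of the case $f(k)=1$, then asserts ``no loss of generality in assuming $f(k)\ge 2$'' and establishes $T_G(r,s)\le 0$ through the chain $\sum_{k\mid n} g_{k,\,n/k}^{r,s}f(k)\,k\le\sum_{k\mid n}\sqrt{g_{k,\,n/k}^{r,s}}\,k\le\sum_{k\mid n}g_{k,\,n/k}^{r,s}\,k$, pairing this with $T_G(r,s)\ge 0$; solvability is obtained, as in your argument, from $f(k)\le k$ via (i) of Theorem \ref{qgroups}. Your observation that $g_{k,\,n/k}^{r,s}\ge 1$ forces $\big(g_{k,\,n/k}^{r,s}\big)^{-1/2}\le 1$, and hence by integrality $f(k)=1$ for every $k\mid n$, shows that the paper's ``$f(k)\ge 2$'' case is vacuous under the stated hypotheses: the sum \eqref{e3} vanishes term by term and no manipulation of inequalities is needed. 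What your approach buys is transparency --- it exposes that the hypotheses are satisfiable only when $G$ is cyclic (by (i) of Theorem \ref{ms}), so the theorem's content is narrower than its phrasing suggests; what the paper's chain buys is that it would continue to function under a relaxation of the lower bound $1\le g_{k,\,n/k}^{r,s}$, although as stated it proves nothing beyond your termwise argument.

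One small caveat on your parenthetical remark: $|G|=\exp(G)$ does not by itself imply that $G$ is cyclic (for instance $S_3$ has order and exponent both equal to $6$). The cyclicity you record is nevertheless correct, but it should be justified by the equivalence ``$f(k)=1$ for all $k$ if and only if $G$ is cyclic,'' which is exactly what the paper's own proof invokes through (i) of Theorem \ref{ms}, rather than by comparing $|G|$ with $\exp(G)$. Since only $T_G(r,s)=0$ and solvability are asserted in the statement, this does not affect the validity of your proof of the theorem itself.
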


\begin{proof}[Proof of Theorem \ref{t1}]
It is useful to  note that \eqref{e3} becomes equal to zero when $f(k)=1$ and $f(k)=1$ happens if and only if $G$ satisfies (i) of Theorem \ref{ms}, that is,  $G$ is cyclic. Therefore the real issue of studying $T_G(r,s)=0$ is when $f(k) \geq 2$, so there is no loss of generality in assuming  $f(k) \ge 2$. Since
all $g_{k, \ n/k}^{r,s} \ge 1$, we have  $T_G(r,s) \ge 0$ and it is enough to show $T_G(r,s) \le 0$ in order to conclude  that $T_G(r,s)=0$. Now 
$$f(k) \le  {\big( g_{k, \ n/k}^{r,s}\big)}^{-\frac{1}{2}} \ \ \Rightarrow \ \ g_{k, n/k}^{r,s}   \  f(k) \le \sqrt{g_{k, n/k}^{r,s}}   $$
implies 
$$  \sum_{k \ | \ n} g_{k, n/k}^{r,s}   \  f(k)   \ k  \le  \sum_{k \ | \ n} \sqrt{g_{k, n/k}^{r,s}}      \ k \le  \sum_{k \ | \ n}  g_{k, \ n/k}^{r,s}  \ k $$ 
and so
$$  \sum_{k \ | \ n} g_{k, n/k}^{r,s}  \ k \ (f(k)-1) \le 0  $$ 
which gives what we claimed $T_G(r,s)=0$. On the othe hand,  $G$ is a $\mathcal{Q}$-group because $f(k) \le k$ and so it is solvable by (i) of Theorem \ref{qgroups}. It follows that there exist $(r,s) \in \mathbb{R}^2$ such that $T_G(r,s)=0$ and $G$ is solvable.
        \end{proof}

While the existence of $(r,s)$ is shown under the conditions of Theorem \ref{t1}, the structural conditions of $G$ are elucidated in Theorem \ref{qgroups}.

\end{document}